\documentclass[11pt]{amsart}
\usepackage{graphicx}
\usepackage{amsmath, amssymb, amsthm}
\usepackage[colorlinks, linkcolor=red, citecolor=blue, urlcolor=blue]{hyperref}
\usepackage{xcolor}
\newtheorem{theorem}{Theorem}[section]
\newtheorem{proposition}[theorem]{Proposition}
\newtheorem{lemma}[theorem]{Lemma}
\newtheorem{corollary}[theorem]{Corollary}
\newtheorem{definition}[theorem]{Definition}

\theoremstyle{definition}

\newcommand{\C}{\mathbb{C}}

\newcommand{\Z}{\mathbb{Z}}

\newcommand{\cA}{\mathcal{A}}
\newcommand{\cM}{\mathcal{M}}

\newcommand{\fF}{\mathfrak{F}}
\newcommand{\supp}{\operatorname{supp}}
\newcommand{\Fix}{\operatorname{Fix}}

\newcommand{\EE}{\mathbb{E}}

\title[Relative ISR for Thompson's group $F$]{Relative invariant subalgebra rigidity for Thompson's group $F$}

\author{Tattwamasi Amrutam}
\email{tattwmasiamrutam@gmail.com}

\author{Artem Dudko}

\email{adudko@impan.pl}
\address{Institute of Mathematics of the Polish Academy of Sciences, Warsaw, Poland}
\date{\today}

\begin{document}

\begin{abstract}
We prove that Thompson's group $F$ satisfies the relative invariant subalgebra rigidity property with respect to its commutator subgroup: every von Neumann subalgebra of $L(F)$ that is invariant under conjugation by $[F,F]$ is of the form $L(N)$ for some normal subgroup $N \trianglelefteq F$. Along the way, we establish a general factoriality criterion for invariant subalgebras whose hypotheses are met whenever the ambient group is i.c.c., simple, and every faithful ergodic measure-preserving action of it on a probability space is essentially free. 
\end{abstract}

\maketitle

\section{Introduction}
The interplay between geometric group theory and Popa's deformation/rigidity paradigm \cite{Popa07} has driven substantial advancements in the structural classification of von Neumann algebras arising from countable discrete groups. Within this rich area of study, a prominent line of inquiry focuses on understanding the rigidity phenomena associated with von Neumann subalgebras $\cA \le L(\Gamma)$ that are normalized by the full ambient group $\Gamma$. Inspired by the pioneering investigations of Chifan and Das \cite{CD20} into negatively curved groups, as well as Alekseev and Brugger \cite{AB21} on lattices, Kalantar and Panagopoulos \cite{KP23} proved a striking structural result. They showed that for irreducible lattices inside higher-rank semisimple Lie groups, any $\Gamma$-invariant von Neumann subalgebra of $L(\Gamma)$ is trivially generated by a normal subgroup of $\Gamma$.

Building upon these discoveries, the first-named author and Jiang \cite{AJ23} formally defined the \emph{invariant subalgebra rigidity (ISR) property} to systematically explore this phenomenon across broader classes of countable discrete groups. By definition, a group $\Gamma$ exhibits the ISR property when every von Neumann subalgebra of $L(\Gamma)$ that is invariant under the conjugation action of $\Gamma$ is canonically of the form $L(N)$ for a certain normal subgroup $N \trianglelefteq \Gamma$. This conceptual framework has proven highly successful, inspiring a wealth of subsequent generalizations and applications (see, for instance, \cite{CDS23, DJ24, JZ24, ADJS25}; see also \cite{AJ26} for the $C^*$-version). In addition, recent contributions by Jiang, Li, and Liu \cite{JL26a, JL26b} have initiated a thorough analysis of the structural behavior of invariant subalgebras even in environments where the ISR property does not hold (also see \cite{jiang2026factors}). 

A related, yet considerably more delicate, avenue of research examines whether such rigid behavior persists when the subalgebra is only assumed to be invariant under conjugation by a proper, non-trivial normal subgroup $N \trianglelefteq \Gamma$, instead of the entirety of $\Gamma$. This refined notion is captured by the following definition, recently introduced by the first-named author in \cite{Amr26}.
\begin{definition}
A countable discrete group $\Gamma$ is said to have the \emph{relative ISR property} if for every non-trivial normal subgroup $N \trianglelefteq \Gamma$ and every von Neumann subalgebra $\cM \le L(\Gamma)$ invariant under conjugation by $N$, one has $\cM = L(K)$ for some subgroup $K \le \Gamma$.
\end{definition}
In the work \cite{Amr26}, the first-named author established the relative ISR property for torsion-free acylindrically hyperbolic groups possessing a trivial amenable radical, and irreducible lattices within higher-rank semisimple Lie groups (for example, $\mathrm{SL}_d(\Z)$ where $d \ge 3$ is an odd integer). In both of these contexts, a pivotal feature is that the ambient group is $C^*$-simple. Specifically, in these earlier works, proving that the invariant von Neumann subalgebra is a subfactor (i.e., its center is trivial) constituted an essential intermediate step. This deduction relied fundamentally on the $C^*$-simplicity of the underlying group (see~\cite{AHO24}).

However, when turning our attention to Thompson's group $F$---the classic group composed of orientation-preserving piecewise linear homeomorphisms of the unit interval $[0,1]$ with dyadic breakpoints and slopes strictly in $\{2^k : k \in \Z\}$---we encounter a significantly different landscape. It remains a celebrated, long-standing open question whether $F$ is amenable \cite{CFP96}. We do not even know whether $F$ is $C^*$-simple. As a result, the standard strategy of exploiting $C^*$-simplicity to force the invariant subalgebra to be a factor completely breaks down for this group. 

To overcome this barrier, we employ a different method. Instead of algebraic $C^*$-simplicity, our new approach relies heavily on topological dynamics, specifically, the notion of compressible actions. Despite its elusive amenability status, $F$ possesses an exceptionally rich internal algebraic structure: its commutator subgroup $[F,F]$ is an infinite simple group \cite{Brown87} that consists exactly of those homeomorphisms in $F$ acting trivially near the endpoints of $[0,1]$, and the abelianization $F/[F,F]$ is  isomorphic to $\Z^2$. We notice that in \cite{DJ24} different topological and algebraic properties were used to prove the ISR property for the class of \emph{approximately finite} groups.

The main result of this paper is the following.

\begin{theorem}
\label{thm:mainintro}
Let $F$ denote Thompson's group, and let $\cM \le L(F)$ be a von Neumann subalgebra invariant under conjugation by the commutator subgroup $[F,F]$. Then there exists a normal subgroup $N \trianglelefteq F$ such that $\cM = L(N)$.
\end{theorem}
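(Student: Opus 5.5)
The strategy has three stages. First show that the center of $\cM$ is trivial, then that $\cM$ contains enough group unitaries, and finally invoke a known invariant-subalgebra rigidity result for the simple group $[F,F]$ to conclude $\cM = L(N)$. Write $F' = [F,F]$.

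\textbf{Step 1: $\cM$ is a factor.} The center $\mathcal{Z}(\cM)$ is an abelian von Neumann algebra. It is $F'$-invariant, since conjugation by $F'$ preserves $\cM$ and hence its center. It inherits the trace $\tau$ from $L(F)$. Writing $\mathcal{Z}(\cM) = L^\infty(X,\mu)$, conjugation gives a measure-preserving action $F' \curvearrowright (X,\mu)$. If $\mathcal{Z}(\cM)\neq\C$, pass to an ergodic component, or use a nontrivial invariant projection, and split into two cases.
\begin{itemize}
\item If the action is not faithful, its kernel is a normal subgroup of the simple group $F'$, so it is all of $F'$. Then $\mathcal{Z}(\cM)$ commutes with every $u_g$, $g\in F'$. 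Since $F'$ is i.c.c. relative to $F$ (every nontrivial element of $F$ has infinite $F'$-orbit under conjugation), $L(F')'\cap L(F)=\C$, so $\mathcal{Z}(\cM)=\C$.
\item If the action is faithful, I will use that every faithful ergodic p.m.p. action of $F'$ is essentially free. This is a Dudko–Medynets type result for groups with compressible, highly transitive actions such as $F'$ on $(0,1)$, and it is where compressibility enters. Essential freeness gives, for each $g\neq e$, a projection $p\le 1$ in $\mathcal{Z}(\cM)$ with $p\,\sigma_g(p)=0$ and $\tau(p)$ bounded below. I then run the standard computation: $\tau(p\, u_g p u_g^*)\to 0$ along a sequence $g$ escaping to infinity, while $p u_g p u_g^* = p\sigma_g(p)$ has trace $0$ exactly. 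Combined with the Fourier expansion of $p$ in $L(F)$, this forces $p$ to be a scalar. The $L^2$ estimate for an element whose Fourier coefficients are almost invariant under a sequence of conjugations is what the abstract's factoriality criterion packages. I expect this step to be the main obstacle: the argument must run with only $F'$-invariance, and it must exploit that $p$ is simultaneously central in $\cM$ and has a Fourier expansion in $L(F)$.
\end{itemize}

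\textbf{Step 2: $\cM$ contains group unitaries.} For $x\in\cM$ with expansion $x=\sum x_g u_g$, I show $u_g\in\cM$ whenever $x_g\neq0$. Following \cite{Amr26,DJ24}, I average $x$ over conjugates $u_h x u_h^*$ with $h\in F'$ supported in small dyadic intervals. Compressibility lets me choose $h$ commuting with $g$ (supported away from the support of $g$) while moving the supports of the other group elements. Applying the conditional expectation onto $\cM$ together with the factoriality from Step 1, weak limits of such averages isolate $x_g u_g$ up to a central, hence scalar, correction. This shows $\cM=L(K)$ with $K=\{g : u_g\in\cM\}$, a subgroup.

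\textbf{Step 3: $K$ is normal in $F$.} Since $\cM$ is $F'$-invariant, $K$ is normalized by $F'$. The commutator subgroup $F'$ is i.c.c. and simple, and any nontrivial subgroup normalized by $F'$ contains $F'$; this holds because $F'$ is simple and such subgroups are known to be normal in $F$ via the description of normal subgroups of $F$. Hence $K$ is either trivial or contains $F'$. Every subgroup of $F$ containing $F'$ is normal, since $F/F'\cong\Z^2$ is abelian. In either case $K=N\trianglelefteq F$, which proves Theorem~\ref{thm:mainintro}.
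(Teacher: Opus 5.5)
There are genuine gaps. The most serious is in Step 2, which is where the content of the theorem lies.

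\textbf{Step 1.} The essential-freeness/Suzuki-partition argument works by approximating $a\in\cM$ by $\sum_s c_s u_s+\tau(a)$. It then kills each $u_s$ via $\theta(f_j)u_s\theta(f_j)=\theta(f_j\,\sigma_s(f_j))u_s$, which requires equivariance under $u_s$. You only have this for $s\in F'$. Elements of $\cM\le L(F)$ have Fourier support in all of $F$, and nothing in your sketch controls $\theta(f_j)u_s\theta(f_j)$ for $s\in F\setminus F'$. The sketch is also internally inconsistent: the projection $p$ with $p\,\sigma_g(p)=0$ depends on $g$, so you cannot afterwards let $g$ escape to infinity.

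The paper never proves directly that $\cM$ is a factor. It first shows $\EE_{L(F')}(\cM)\subseteq\cM$, and then needs factoriality only for $\cM\cap L(F')$. That algebra is an $F'$-invariant subalgebra of $L(F')$, where the Fourier support lies in $F'$ and the argument is legitimate.

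\textbf{Step 2.} The proposed mechanism fails. Any $g\in F$ without fixed points in $(0,1)$, such as the standard generator $x_0$, has $\supp(g)=(0,1)$, so no nontrivial $h\in F'$ is supported away from it. More fundamentally, conjugation kills the coefficient at $g'$ only if the conjugates $h_k^{-1}g'h_k$ escape to infinity. Any $h$ centralizing $g$ also centralizes $g^2$, so no conjugation average or limit can separate $x_g u_g$ from $x_{g^2}u_{g^2}$. The phrase ``up to a central, hence scalar, correction'' has no content here.

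What conjugation by $F'$ actually gives is the paper's boundary lemma. For $h_k\in F'$ with $\supp(h_k)\to 1$, the operators $u_{h_k}xu_{h_k}^*$ converge in the weak operator topology, along a subsequence, to the truncation of $x$ to $\{h: h'(1)=1\}$. Repeating at $0$ yields exactly $\EE_{L(F')}(x)\in\cM$.

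From there the paper does not isolate coefficients at all. It uses three ingredients:
\begin{itemize}
\item Factoriality of $\cM\cap L(F')$, combined with Dudko--Jiang's theorem for groups with the non-factorizable regular character property, gives $\cM\cap L(F')\in\{\C,L(F')\}$.
\item If $\cM\cap L(F')=\C$, then $\EE_{\cM}(u_n)=0$ for all $n\in F'\setminus\{e\}$. Amrutam's thickness lemma then forces $\cM=\C$.
\item If $\cM\cap L(F')=L(F')$, the Galois correspondence for $L(F')\subseteq\cM\subseteq L(F)$ gives $\cM=L(N)$ with $F'\le N$.
\end{itemize}

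\textbf{Step 3.} Your claim is true, but the justification is circular: the classification of normal subgroups of $F$ does not apply to a subgroup known only to be normalized by $F'$. A direct argument works instead. Since $K\cap F'\trianglelefteq F'$, it equals $\{e\}$ or $F'$. If it is $\{e\}$, then $[n,k]\in K\cap F'=\{e\}$ for all $n\in F'$ and $k\in K$, so $K\subseteq C_F(F')=\{e\}$.
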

Stated differently, this establishes that Thompson's group $F$ enjoys the relative ISR property since every non-trivial normal subgroup of $F$ contains its commutator subgroup. In particular, we show that $F$ has the ISR property, a property that was not previously known. Observe that $F$ has the \textit{nonfactorizable regular character property}, defined in \cite{DJ24}. However, the results of \cite{DJ24} do not imply the ISR property for $F$, since it is not known whether $F$ is non-amenable.

The architecture of our proof integrates two primary components of differing natures. The first component is a broadly applicable criterion for factoriality of invariant subalgebras. We show that if $\Gamma$ is any countable, infinite conjugacy class (i.c.c.), simple group where every faithful ergodic probability measure-preserving action happens to be essentially free, then any $\Gamma$-invariant von Neumann subalgebra of $L(\Gamma)$ must necessarily be a factor (see Theorem \ref{thm:factoriality}). For the specific case of the commutator subgroup $[F,F]$, this essential freeness requirement is fulfilled by the deep results of the second-named author and Medynets \cite{DM14}, who deduced it by proving that the natural topological action of $[F,F]$ on the open interval $(0,1)$ is compressible. This dynamical property serves as our crucial substitute for $C^*$-simplicity in establishing the subfactor condition.

The second crucial component is an internal Fourier-analytic boundary technique executed within the algebra $L(F)$. By observing how elements of $\cM$ behave when conjugated by sequences in $[F,F]$ whose spatial supports contract toward the boundary points $0$ and $1$, we can express the standard conditional expectation $\EE_{L([F,F])}$ as a limit in the weak operator topology. This procedure directly implies that $\EE_{L([F,F])}(\cM)$ is contained within $\cM$. Consequently, the aforementioned factoriality principle guarantees that the intersection $\cM \cap L([F,F])$ is a subfactor. Because $[F,F]$ is simple, a sharp structural dichotomy emerges. Either $L([F,F])$ is contained in $\cM$, or the intersection $\cM \cap L([F,F])$ is trivial. In the former scenario, an appeal to the well-known Galois correspondence for intermediate subalgebras of the inclusion $L([F,F]) \subseteq L(F)$ swiftly concludes the argument. In the latter scenario, a GNS vanishing technique used in \cite{DJ24} guarantees that $\cM = \C$.
\subsection{Organization.} In Section \ref{sec:prelim} we collect the requisite background on group von Neumann algebras, Thompson's group $F$, and the compressible actions of Dudko--Medynets. Section \ref{sec:factoriality} establishes the general factoriality principle and its consequence for $[F,F]$-invariant subalgebras of $L([F,F])$. Section \ref{sec:main} contains the boundary Fourier coefficient lemma and the proof of the Main Theorem.
\subsection{Acknowledgements.} This work was partially supported by the Simons Foundation grant (award no. SFI-MPS-T-Institutes-00010825) and from State Treasury funds as part of a task commissioned by the Minister of Science and Higher Education under the project “Organization of the Simons Semesters at the Banach Center - New Energies in 2026-2028” (agreement no. MNiSW/2025/DAP/491). 
\vspace{0.5cm} 
\begin{center}
    \includegraphics[width=0.5\textwidth]{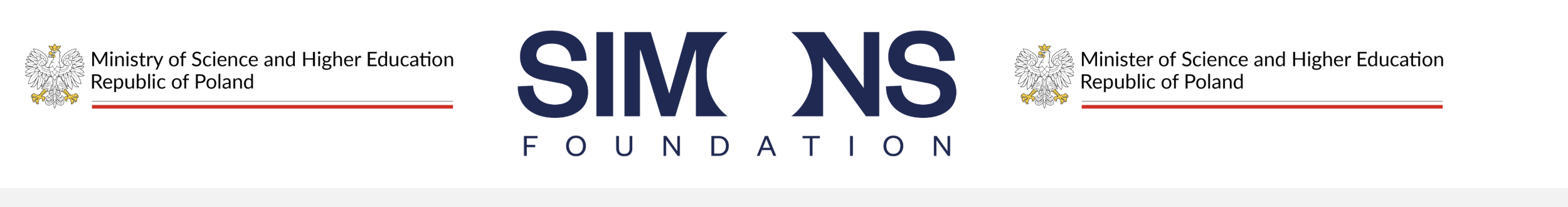} 
\end{center}
The authors thank Yongle Jiang and Adam Skalski for many helpful discussions and suggestions.
\section{Preliminaries}\label{sec:prelim}

\subsection{Group von Neumann algebras}

Let $\Gamma$ be a countable discrete group. We write $\lambda \colon \Gamma \to B(\ell^2 \Gamma)$ for the left regular representation, $\lambda(g) \delta_h = \delta_{gh}$, where $B(\ell^2 \Gamma)$ is the algebra of bounded linear operators on the Hilbert space $\ell^2 \Gamma$. The group von Neumann algebra $L(\Gamma)$ is the weak operator closure of the linear span of $\{\lambda(g) : g \in \Gamma\}$ in $B(\ell^2 \Gamma)$. It carries the canonical faithful normal tracial state $\tau$ given by $\tau(\lambda(g)) = \delta_{g,e}$.

Every element $x \in L(\Gamma)$ admits a Fourier expansion
\[
x = \sum_{g \in \Gamma} x_g \, \lambda(g), \qquad x_g := \tau(x \lambda(g)^*) \in \C,
\]
which converges in $\|\cdot\|_2$, where $\|x\|_2 := \tau(x^* x)^{1/2}$. The function $g \mapsto x_g$ is square-summable, and $x$ is uniquely determined by $\left \{g \in \Gamma : x_g \neq 0\right\}$.

Recall that $\Gamma$ is said to have \emph{infinite conjugacy classes (i.c.c.)} if every conjugacy class except that of the identity is infinite. Equivalently, $L(\Gamma)$ is a $\mathrm{II}_1$ factor; in particular, its center is trivial.

For any subgroup $\Lambda \le \Gamma$, there is a canonical faithful normal conditional expectation $\EE_{L(\Lambda)} \colon L(\Gamma) \to L(\Lambda)$, determined on generators by
\[
\EE_{L(\Lambda)}(\lambda(g)) = \begin{cases} \lambda(g) & \text{if } g \in \Lambda, \\ 0 & \text{otherwise.} \end{cases}
\]
We refer the readers to \cite[Chapter~2]{brown2008textrm} for more details. 
\subsection{Thompson's group \texorpdfstring{$F$}{}}

Thompson's group $F$ is the group of orientation-preserving piecewise linear homeomorphisms $g \colon [0,1] \to [0,1]$ such that
\begin{enumerate}
\item every breakpoint of $g$ lies in the dyadic rationals $\Z[1/2] \cap [0,1]$, and
\item the slope of $g$ at every point of differentiability lies in $\{2^k : k \in \Z\}$.
\end{enumerate}
For an element $g \in F$, we write $g'(0)$ and $g'(1)$ for the one-sided derivatives at $0$ and $1$ respectively; both lie in $\{2^k : k \in \Z\}$. The support of $g$ is denoted $\supp(g) := \{x \in [0,1] : g(x) \neq x\}$.

The basic algebraic structure of $F$, established by Brown \cite{Brown87}, is summarized in the following.

\begin{proposition}\label{prop:F-structure}
Let $F$ be Thompson's group.
\begin{enumerate}
\item $F$ is torsion-free and i.c.c.
\item The commutator subgroup $[F,F]$ is simple, infinite, and consists precisely of those elements $g \in F$ for which $g'(0) = g'(1) = 1$; equivalently, $g$ is the identity on some neighborhood of both $0$ and $1$.
\item The abelianization $F/[F,F]$ is isomorphic to $\Z^2$.
\end{enumerate}
\end{proposition}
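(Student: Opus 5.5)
This proposition is attributed to Brown \cite{Brown87}, and I would establish the three items in the order (2), (3), (1), working throughout in the piecewise linear model.

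\textbf{Items (2) and (3).} First I consider the homomorphism
\[
\chi\colon F\to\Z^2,\qquad \chi(g)=\bigl(\log_2 g'(0),\,\log_2 g'(1)\bigr).
\]
It is a homomorphism by the chain rule, since $g$ fixes $0$ and $1$. It is surjective: an explicit generator $x_0$ has $\chi(x_0)=(-1,1)$, and $x_1$ has $\chi(x_1)=(0,1)$, so the image has full rank with index one. Since $\Z^2$ is abelian, $[F,F]\subseteq\ker\chi$.

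For the reverse inclusion, take $g\in\ker\chi$. Then $g$ is the identity near $0$ and near $1$, so $\supp(g)\subseteq[a,b]$ for dyadic $0<a<b<1$. I will use the standard fact that $F$ acts transitively on ordered pairs of dyadics in $(0,1)$, together with the self-similarity that the pointwise stabilizer of the complement of a dyadic interval $[c,d]$ is a copy of $F$. With these, I choose a copy $F_{[c,d]}\cong F$ with $[a,b]\subset(c,d)$. Then $g$ lies in the kernel of the germ map of $F_{[c,d]}$. The key reduction is to show that $g$ is a commutator of elements of $F$ supported in slightly larger intervals.

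The cleanest route is the classical one. Fix $h\in F$ with $h([a,b])\cap[a,b]=\emptyset$ and $h$ supported in $(0,1)$. Then $g$ and $hgh^{-1}$ commute. Writing $\ker\chi$ as the union of the groups $F_{[c,d]}$, I would show that each element of $F_{[c,d]}$ with trivial germs is a product of commutators of elements of $F_{[c',d']}$, where $[c',d']\supset[c,d]$. Choosing a single element $k$ of a larger copy with prescribed slopes $2^{m},2^{n}$ at the endpoints $c,d$, I correct $g$ by $k\,\cdot\,(\cdot)\,k^{-1}$ commutators to reduce it to generators. This gives $\ker\chi=[F,F]$, which yields (3) immediately.

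Simplicity of $[F,F]$ is the main step. Given $1\ne N\trianglelefteq[F,F]$, pick $1\ne n\in N$ and a small dyadic interval $J$ with $n(J)\cap J=\emptyset$. For $u,v\in[F,F]$ supported in $J$, the commutator $[[n,u],v]$ lies in $N$ and equals $[u,v]$, because $nun^{-1}$ is supported in $n(J)$ and so commutes with $v$. Hence $N\supseteq[F_J,F_J]$, where $F_J$ is the copy of $F$ supported in $J$. Conjugating by $[F,F]$, which acts transitively enough on dyadic intervals compactly contained in $(0,1)$, shows that $N$ contains $[F_I,F_I]$ for every such $I$. Every element of $[F,F]$ lies in some $F_I$ with trivial germs at $\partial I$, and the previous paragraph shows it is then in $[F_{I'},F_{I'}]$ for slightly larger $I'$. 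So $N=[F,F]$. Infiniteness is clear.

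\textbf{Item (1).} Torsion-freeness holds because if $g\ne e$, then on the first component of $\supp(g)$ near a fixed point $p$ we have, say, $g(x)>x$, so $g^n(x)>x$ for all $n$. For the i.c.c. property, take $g\ne e$ and a point $x\in(0,1)$ with $g(x)\ne x$. Conjugates $hgh^{-1}$ have support $h(\supp g)$. Since $F$ acts on dyadics in $(0,1)$ with infinite orbits, infinitely many distinct supports arise, hence infinitely many distinct conjugates.

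I expect the main obstacle to be the equality $\ker\chi=[F,F]$ via the local commutator decomposition. I would try first to cite Cannon--Floyd--Parry directly for that step, where it is proved using normal forms.
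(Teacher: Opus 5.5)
The paper gives no proof of this proposition; it imports it from Brown (the usual textbook source is Cannon--Floyd--Parry). Your proposal therefore has to stand on its own, and as written it has two genuine gaps.

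\textbf{First gap: the inclusion $\ker\chi\subseteq[F,F]$.} Item (3), the description of $[F,F]$ in (2), and your simplicity argument all rest on this inclusion. You use it twice in the simplicity step: to know that $u,v$ supported in $J$ lie in $[F,F]$, and to conclude that an element of $F_I$ with trivial germs lies in $[F_I,F_I]$. Your sketch does not establish it. Passing to a copy $F_{[c,d]}\cong F$ just restates the same problem for $F_{[c,d]}$. The observation that $g$ and $hgh^{-1}$ commute is never used. ``Correcting $g$ by commutators to reduce it to generators'' is not an argument.

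The missing step is short, and you already have its ingredients. Since $F=\langle x_0,x_1\rangle$, the map $\alpha\colon\Z^2\to F^{ab}$ sending $(m,n)$ to the class of $x_0^mx_1^n$ is a surjective homomorphism. The character $\chi$ factors as $\bar\chi\circ\pi$ through the abelianization $\pi\colon F\to F^{ab}$. The composite $\bar\chi\circ\alpha$ sends $e_1\mapsto(-1,1)$ and $e_2\mapsto(0,1)$, which is an invertible integer matrix. Hence $\alpha$ is injective, so $\alpha$ and then $\bar\chi$ are isomorphisms. This gives $\ker\chi=[F,F]$ and $F^{ab}\cong\Z^2$ at once. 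The only nontrivial input is the standard fact that $x_0,x_1$ generate $F$. With this in hand, your simplicity argument is the standard one and goes through, applying the same fact to each $F_I\cong F$.

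\textbf{Second gap: the i.c.c.\ part of (1).} The identity $\supp(hgh^{-1})=h(\supp g)$ separates conjugates only when $\supp g$ has an endpoint inside $(0,1)$; in that case every $F$-orbit in $(0,1)$ is infinite and the argument works. It gives nothing when $\supp g=(0,1)$. For instance, $x_0$ has no fixed points in $(0,1)$, so every conjugate of $x_0$ has support exactly $(0,1)$.

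The fix is a displacement argument, the same centralizer trick the paper uses in Case 2 of the boundary lemma, and it finally puts your point $x$ with $g(x)\neq x$ to use. By continuity, choose an open interval $U\ni x$ with $g(U)\cap U=\emptyset$. Suppose $h_1,h_2$ are supported in $U$ and $h_1gh_1^{-1}=h_2gh_2^{-1}$. Then $k=h_2^{-1}h_1$ commutes with $g$, so $k=gkg^{-1}$ is supported in $U\cap g(U)=\emptyset$, forcing $h_1=h_2$. Since $F$ contains infinitely many elements supported in $U$ (a copy of $F$ on a dyadic subinterval), the conjugacy class of $g$ is infinite. Your torsion-freeness argument is fine as it stands.
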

In particular, $[F,F]$ is itself i.c.c.; consequently both $L(F)$ and $L([F,F])$ are $\mathrm{II}_1$ factors. We will write $\supp(g) \to 1$ for a sequence $\{g_k\} \subset F$ when $\supp(g_k) \subseteq (1 - \varepsilon_k, 1)$ for some sequence $\varepsilon_k \to 0$; similarly for $\supp(g) \to 0$. In view of Proposition \ref{prop:F-structure}(2), sequences in $[F,F]$ with $\supp(g_k) \to 1$ or $\supp(g_k) \to 0$ are abundant. Indeed, any non-trivial element of $[F,F]$ may be ``transported'' arbitrarily close to either endpoint by conjugation within $F$.

\subsection{Compressible actions and Essential freeness} The following notion was introduced by the second-named author and Medynets in \cite{DM14} as a topological-dynamical mechanism for ruling out non-regular characters on certain transformation groups.
\begin{definition}\cite[Definition 2.5]{DM14}, 
Let $\Gamma$ act by homeomorphisms on a regular Hausdorff topological space $X$. The action is called \emph{compressible} if there exists a basis $\mathcal{U}$ of the topology on $X$ such that
\begin{enumerate}
\item for all $g \in \Gamma$, there exists $U \in \mathcal{U}$ with $\supp(g) \subseteq U$;
\item for all $U_1, U_2 \in \mathcal{U}$, there exists $g \in \Gamma$ with $g(U_1) \subseteq U_2$;
\item for all $U_1, U_2, U_3 \in \mathcal{U}$ with $\overline{U}_1 \cap \overline{U}_2 = \emptyset$, there exists $g \in \Gamma$ with $g(U_1) \cap U_3 = \emptyset$ and $\supp(g) \cap U_2 = \emptyset$;
\item for all $U_1, U_2 \in \mathcal{U}$, there exists $U_3 \in \mathcal{U}$ with $U_3 \supseteq U_1 \cup U_2$.
\end{enumerate}
\end{definition}

Recall that an action of $\Gamma$ on a probability measure space $(X, \mu)$ is \emph{essentially free} if $\mu(\Fix(g)) = 0$ for every $g \in \Gamma \setminus \{e\}$, where $\Fix(g) := \{x \in X : g \cdot x = x\}$.

The key dynamical input from \cite{DM14} that we shall use is the following.

\begin{theorem}\cite[Lemma 3.2, Corollary 3.3, Corollary 3.10]{DM14}\label{thm:DM}
The natural action of $[F,F]$ on $(0,1)$ by piecewise linear homeomorphisms is compressible. Consequently, every faithful ergodic measure-preserving action of $[F,F]$ on a standard probability space is essentially free.
\end{theorem}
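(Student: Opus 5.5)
The plan is to prove the two assertions separately: first verify the four axioms of compressibility for an explicit basis, then deduce essential freeness from compressibility by a character-theoretic argument. For the basis I take $\mathcal{U}$ to be the family of open intervals $(a,b)$ with dyadic endpoints $0<a<b<1$. The basic tool is the standard transitivity lemma for $F$ (Cannon--Floyd--Parry): for dyadic points $0=x_0<x_1<\dots<x_n=1$ and $0=y_0<y_1<\dots<y_n=1$ there is $g\in F$ with $g(x_i)=y_i$ for all $i$. I will apply it with $x_1=y_1=\delta$ and $x_{n-1}=y_{n-1}=1-\delta$ for a small dyadic $\delta$, and choose on $[0,\delta]$ and $[1-\delta,1]$ the identity piece. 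This produces an element that fixes neighbourhoods of $0$ and $1$, so by Proposition~\ref{prop:F-structure}(2) it lies in $[F,F]$.

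With this tool the axioms are checked as follows. For (1), an element $g\in[F,F]$ is the identity near $0$ and $1$, so $\supp(g)$ lies in some $[c,d]\subset(0,1)$. Enlarging to a dyadic interval gives a $U\in\mathcal{U}$ containing $\supp(g)$. For (4), the convex hull of $U_1\cup U_2$, slightly enlarged, is again in $\mathcal{U}$. For (2), pick a dyadic closed subinterval $[r,s]\subset U_2$ and use the transitivity lemma to send $a_1\mapsto r$ and $b_1\mapsto s$, fixing $\delta$ and $1-\delta$. This requires $\delta<\min(a_1,r)$ and $1-\delta>\max(b_1,s)$, which we arrange.

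For (3), since $\overline U_1\cap\overline U_2=\emptyset$, one interval lies to the left of the other, say $b_1<a_2$; the other case is symmetric, pushing toward $1$. Because $\overline U_3$ is compact in $(0,1)$, we have $u:=\inf U_3>0$. Choose dyadic $\delta<r<s<\min(u,a_1)$ and an element $g$ fixing $[0,\delta]$, mapping $a_1\mapsto r$ and $b_1\mapsto s$, and fixing a point $t$ with $b_1<t<a_2$ together with everything to the right of $t$. Then $g(U_1)\cap U_3=\emptyset$ and $\supp(g)\cap U_2=\emptyset$. None of this is hard; it is bookkeeping with dyadic points.

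For the consequence, let $[F,F]\curvearrowright(X,\mu)$ be faithful, ergodic and measure preserving, and set $\chi(g)=\mu(\Fix(g))$. This is a conjugation-invariant positive-definite function with $\chi(e)=1$, namely the character of the associated groupoid/Koopman-type representation. I would follow the mechanism of \cite{DM14}, which has three steps.
\begin{enumerate}
\item Using (2) and (3), build conjugates $h_ngh_n^{-1}$ whose supports are pushed into regions disjoint from a fixed compact set and from one another.
\item Use ergodicity, through a mean-ergodic/decorrelation argument along these conjugates, to show that $\chi$ is multiplicative on elements with disjoint supports. Then $\chi(g)=\chi(g_1\cdots g_k)=\chi(g)^k$ for disjoint conjugates $g_i$ of $g$; the first equality holds because $g_1\cdots g_k$ is itself conjugate to a power-like product realised by disjointly supported copies.
\item Conclude $\chi(g)\in\{0,1\}$.
\end{enumerate}
Finally, $\{g:\chi(g)=1\}$ is exactly the kernel of the action, which is a normal subgroup. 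Since the action is faithful, and since $[F,F]$ is simple by Proposition~\ref{prop:F-structure}(2), this set is $\{e\}$. Hence $\mu(\Fix(g))=0$ for all $g\neq e$, which is essential freeness.

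The main obstacle is step (2) of the consequence: proving the asymptotic multiplicativity $\chi(g\,h_ng'h_n^{-1})\to\chi(g)\chi(g')$ from ergodicity alone. The first thing I would try is to show that the sets $\Fix(h_ng'h_n^{-1})=h_n\Fix(g')$ become asymptotically independent of $\Fix(g)$. To get this, I would use axiom (3) to move the conjugates so that they commute with a large subgroup that acts ergodically on $X$, and then average, following \cite[Corollary 3.10]{DM14}.
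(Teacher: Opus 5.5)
The paper gives no argument for this statement and imports it from \cite{DM14}. Your verification of compressibility is correct. Dyadic intervals form a basis, the transitivity lemma with identity pieces on $[0,\delta]$ and $[1-\delta,1]$ produces elements of $[F,F]$, and axioms (1)--(4) check out as you describe.

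The gap is in the deduction of essential freeness, at exactly the step you flag, and the proposed mechanism would not work. In \cite{DM14}, multiplicativity on disjointly supported elements is obtained for \emph{indecomposable} characters. In the finite factor representation $\pi_\chi$, any weak-operator limit point of $\pi_\chi(h_ngh_n^{-1})$ lies in $\pi_\chi([F,F])'\cap\pi_\chi([F,F])''$. That algebra is $\C$ by factoriality, so the limit is $\chi(g)1$.

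Ergodicity of $(X,\mu)$ does not in general make $g\mapsto\mu(\Fix(g))$ indecomposable. For an irrational rotation of $\Z$ this character is $\delta_0$, whose GNS algebra $L(\Z)$ is abelian. Decomposable characters need not be multiplicative: the trivial action on half of $X$ plus an essentially free action on the other half gives $\mu(\Fix(g))=1/2$ for every $g\neq e$. Then $\chi(gk)=1/2\neq1/4$ for disjointly supported $g,k$. So ergodicity would have to be converted into triviality of the centre of $\pi_\chi([F,F])''$, and nothing in your outline does that.

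Your suggested substitute does not close this. Asymptotic independence of $\Fix(g)$ and $h_n\Fix(g')$ would compute $\mu(\Fix(g)\cap\Fix(k_n))$ with $k_n=h_ng'h_n^{-1}$. But $\chi(gk_n)=\mu(\Fix(gk_n))$, and $\Fix(gk_n)\supseteq\Fix(g)\cap\Fix(k_n)$ can be strictly larger. A point stabilizer may contain $gk_n$ without containing $g$ or $k_n$.

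Step (3) is also wrong as written: the equality $\chi(g)=\chi(g_1\cdots g_k)$ is false. Since $\supp(tgt^{-1})=t(\supp(g))$, conjugation preserves the number of components of the support. The product $g_1\cdots g_k$ has $k$ times as many components, so it is never conjugate to $g$ for $k\ge2$. Multiplicativity by itself does not force values in $\{0,1\}$: the Thoma characters of $S_\infty$ are multiplicative on disjointly supported permutations yet take intermediate values. \cite{DM14} needs a further argument specific to Thompson-type groups to show that the only indecomposable characters of $[F,F]$ are $1$ and $\delta_e$.

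Even with that classification, the passage to actions goes through a decomposition: $\mu(\Fix(\cdot))=\alpha\delta_e+(1-\alpha)1$ for some $\alpha\in[0,1]$. Your kernel argument only excludes $\alpha=0$. Excluding $0<\alpha<1$ uses ergodicity again:
\begin{enumerate}
\item A faithful ergodic action of the infinite simple group $[F,F]$ is non-atomic, so almost every orbit is infinite.
\item Hence $L^2$ of the orbit equivalence relation has no nonzero $[F,F]$-invariant vector.
\item But the trivial summand would produce such a vector inside the cyclic subspace of the diagonal indicator.
\end{enumerate}
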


\subsection{Suzuki-type partitions of unity}

The technical lemma below is a Rokhlin-type construction for free actions; it appears in essentially this form in \cite{suzuki2020complete} and constituted an important part in his classification of intermediate algebras.

\begin{lemma}\label{lem:suzuki}
Let $\Gamma \curvearrowright (X, \mu)$ be an essentially free probability measure-preserving action, and let $\fF \subset \Gamma \setminus \{e\}$ be a finite subset. Then there exist $m\in\mathbb N$ and non-negative functions $f_1, \dots, f_m \in L^\infty(X, \mu)^+$ such that
\[
\sum_{j=1}^m f_j^2 = 1 \quad \text{and} \quad (s \cdot f_j) \, f_j = 0 \quad \text{for every } s \in \fF \text{ and every } j.
\]
\end{lemma}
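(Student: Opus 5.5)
The plan is a Rokhlin-type tower argument: first shrink a set of positive measure until it is disjoint from all its $\fF$-translates, then cover $X$ by finitely many such sets and smooth the resulting indicator functions into a square partition of unity.

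\textbf{Step 1 (local separation).} Fix $s\in\fF$. Since the action is essentially free, $\mu(\Fix(s))=0$. So for almost every $x$ we have $sx\neq x$. Choose a countable separating family of Borel sets $\{B_n\}$; we may take $X$ standard, or work with a countably generated $\sigma$-algebra that is invariant under the countable group generated by $\fF$. Then
\[
X\setminus \Fix(s)=\bigcup_n \bigl(B_n\cap s^{-1}(X\setminus B_n)\bigr).
\]
Hence every set $A$ of positive measure contains a positive-measure subset $A'$ with $sA'\cap A'=\emptyset$.

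Iterating this over the finitely many elements of $\fF$, any positive-measure set contains a positive-measure subset $E$ with $sE\cap E=\emptyset$ (up to null sets) for all $s\in\fF$. Shrinking preserves this property for the elements already handled, so the iteration is consistent.

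\textbf{Step 2 (exhaustion).} By a maximality argument (Zorn's lemma, or a greedy countable exhaustion using measure), $X$ is, up to null sets, a countable disjoint union of sets $E_i$ of this type. The obstacle is that we need finitely many functions with exact equality $\sum_j f_j^2=1$. A countable union cannot simply be truncated, since the tail has small but nonzero measure.

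\textbf{Step 3 (finite regrouping).} I would use a graph-colouring argument. Consider the Borel graph on $X$ in which $x\sim y$ iff $y=sx$ for some $s\in\fF\cup\fF^{-1}$. Each vertex has degree at most $d=2|\fF|$. By the Kechris--Solecki--Todorcevic theorem, a Borel graph of bounded degree $d$ admits a Borel proper colouring with $d+1$ colours. Apply it to the free part, which is conull and invariant, and set $X=C_1\sqcup\dots\sqcup C_{d+1}$ with each $C_j$ independent. Independence means exactly that $sC_j\cap C_j=\emptyset$ for all $s\in\fF$.

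Alternatively, the colouring can be built by hand from the countable partition $\{E_i\}$ of Step 2. Process the pieces greedily and assign each point the least colour not already used by its at most $d$ neighbours, ordering points by the index $i$ of their piece. All sets involved are Borel because the index function is Borel and $\fF$ is finite.

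\textbf{Step 4 (conclusion).} Set $m=d+1$ and $f_j=1_{C_j}$. These are nonnegative, satisfy $\sum_j f_j^2=\sum_j 1_{C_j}=1$ almost everywhere, and for $s\in\fF$,
\[
(s\cdot f_j)\,f_j=1_{sC_j}1_{C_j}=0 .
\]

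The main obstacle is the measurability of the colouring in Step 3; the greedy Borel construction, or the KST theorem, handles it. Step 1 is the only place where essential freeness is used.
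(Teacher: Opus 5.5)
Your argument is correct. It necessarily takes a different route from the paper, which gives no proof of Lemma~\ref{lem:suzuki} and simply imports it from Suzuki.

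A standard proof of this kind of statement goes by compactness. For a free action on a compact space, one picks around each point an open set disjoint from its $\fF$-translates, passes to a finite subcover, takes a subordinate partition of unity $\{g_j\}$, and sets $f_j=g_j^{1/2}$. The measurable case reduces to this on the Gelfand spectrum of $L^\infty(X,\mu)$, where an essentially free action becomes genuinely free.

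You instead stay on the measure space. Steps~1--2 produce a countable partition into $\fF$-independent sets, and Step~3 compresses it into $2|\fF|+1$ independent Borel sets. This uses Kechris--Solecki--Todorcevic, or your greedy recolouring, which is essentially their proof. Your route gives slightly more than the lemma asks: indicator functions, and the bound $m\le 2|\fF|+1$ independently of the action. The cost is that it needs a pointwise Borel action on a standard Borel space. In Theorem~\ref{thm:factoriality} the action is only given on $Z(\cA)\cong L^\infty(X,\nu)$, so a point realization is tacitly used there. The compactness route works directly with the function algebra and also yields the continuous $f_j$ needed in $C^*$-settings.

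Two small points:
\begin{enumerate}
\item Your fallback of passing to a countably generated invariant sub-$\sigma$-algebra does not work as stated. The identity you display for $X\setminus\Fix(s)$ requires $\{B_n\}$, taken closed under complements, to separate the points of $X$. Simply take $X$ standard.
\item If you use KST, Steps~1--2 are superfluous, and essential freeness enters through the conullness of the free part, not through Step~1. In the by-hand version, first discard a $\langle\fF\rangle$-invariant null set, so that the $E_i$ are exactly independent and exactly cover what remains.
\end{enumerate}
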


\section{Factoriality of invariant subalgebras}\label{sec:factoriality}

The aim of this section is to establish a general factoriality result for invariant subalgebras. Our key technical input is the following proposition, which rules out equivariant embeddings of essentially free dynamical systems into the center of an invariant subalgebra.

\begin{proposition}\label{prop:no-equivariant-hom}
Let $\Gamma$ be a countable i.c.c.\ group and let $\cM \le L(\Gamma)$ be a von Neumann subalgebra with $\cM \neq \C$. Assume that $\Gamma \curvearrowright (X, \mu)$ is an essentially free probability measure-preserving action. Then there does not exist a $\Gamma$-equivariant, state-preserving unital $*$-homomorphism
\[
\theta \colon L^\infty(X, \mu) \to Z(\cM).
\]
\end{proposition}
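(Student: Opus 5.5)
The plan is a Suzuki-type averaging argument. The partition of unity from Lemma~\ref{lem:suzuki}, transported into $Z(\cM)$ via $\theta$, kills the Fourier coefficients of elements of $\cM$. Suppose, for contradiction, that such a $\theta$ exists. Since $\cM \neq \C$, pick $x \in \cM$ with $x \neq 0$ and $\tau(x) = 0$; for instance, take $x = y - \tau(y)1$ for some $y \in \cM \setminus \C 1$. We will show $\|x\|_2 < \varepsilon$ for every $\varepsilon > 0$, which forces $x = 0$.

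\textbf{Step 1: truncation.} Fix $\varepsilon > 0$. Since $x = \sum_g x_g \lambda(g)$ converges in $\|\cdot\|_2$ and $x_e = \tau(x) = 0$, there is a finite set $\fF_0 \subset \Gamma \setminus \{e\}$ such that the tail $r := x - \sum_{g \in \fF_0} x_g \lambda(g)$ satisfies $\|r\|_2 < \varepsilon$. Let $\fF := \fF_0 \cup \fF_0^{-1}$, still a finite subset of $\Gamma\setminus\{e\}$. Apply Lemma~\ref{lem:suzuki} to the essentially free action and $\fF$ to get $f_1,\dots,f_m \ge 0$ with $\sum_j f_j^2 = 1$ and $(s\cdot f_j) f_j = 0$ for all $s \in \fF$. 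Put $p_j := \theta(f_j) \in Z(\cM)$; these are positive and $\sum_j p_j^2 = \theta(1) = 1$ because $\theta$ is a unital $*$-homomorphism.

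\textbf{Step 2: the averaging map.} Define $\Phi \colon L(\Gamma) \to L(\Gamma)$ by $\Phi(z) = \sum_j p_j z p_j$. This map is unital, completely positive, and trace preserving, since $\tau(\Phi(z)) = \tau(z \sum_j p_j^2) = \tau(z)$. By the Kadison--Schwarz inequality, $\|\Phi(z)\|_2 \le \|z\|_2$. We use two properties of $\Phi$.
\begin{enumerate}
\item $\Phi(x) = x$. Each $p_j$ lies in the center of $\cM$ and hence commutes with $x \in \cM$, so $\Phi(x) = \sum_j p_j^2 x = x$.
\item $\Phi(\lambda(s)) = 0$ for $s \in \fF_0$. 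Interpreting $\Gamma$-equivariance of $\theta$ as $\lambda(g)\theta(f)\lambda(g)^* = \theta(g\cdot f)$, we compute
\[
p_j \lambda(s) p_j = \lambda(s)\,\theta(s^{-1}\cdot f_j)\,\theta(f_j) = \lambda(s)\,\theta\bigl((s^{-1}\cdot f_j) f_j\bigr) = 0,
\]
because $s^{-1} \in \fF$.
\end{enumerate}

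\textbf{Step 3: conclusion.} Combining these, $x = \Phi(x) = \sum_{g\in\fF_0} x_g \Phi(\lambda(g)) + \Phi(r) = \Phi(r)$. Hence $\|x\|_2 \le \|r\|_2 < \varepsilon$. Since $\varepsilon$ was arbitrary, $x = 0$, a contradiction.

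The only delicate point is the bookkeeping in Step~2(2): one must fix the convention for the $\Gamma$-action on $L^\infty(X,\mu)$ compatibly with the equivariance of $\theta$, and this is why $\fF$ is symmetrized. We also note that the i.c.c.\ hypothesis and state-preservation do not appear to be needed in this argument; presumably they serve to make $Z(\cM)$ carry a well-defined trace-compatible action in the applications.
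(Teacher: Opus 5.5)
Your proof is correct and is essentially the paper's argument. The paper likewise pushes the Suzuki partition of unity into $Z(\cM)$ via $\theta$ and uses the unital, trace-preserving, $\|\cdot\|_2$-contractive map $\psi(x)=\sum_j\theta(f_j)x\theta(f_j)$, which fixes $\cM$ and kills $\lambda(s)$ for $s\in\fF$, to get $\|a-\tau(a)1\|_2<\varepsilon$ for all $a\in\cM$. The paper writes $\theta(f_j)\lambda(s)\theta(f_j)=\theta(f_j(s\cdot f_j))\lambda(s)$, so it needs no symmetrization of $\fF$; your symmetrization is harmless but redundant, since $(s\cdot f_j)f_j=0$ already implies $f_j(s^{-1}\cdot f_j)=0$. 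Your remark that the i.c.c.\ and state-preservation hypotheses go unused is accurate for the paper's proof too.
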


\begin{proof}
Assume towards a contradiction that such a map $\theta$ exists. We shall show that this forces $\cM = \C$. Let $a \in \cM$; our goal is to demonstrate that $a = \tau(a) 1$, where $\tau$ is the canonical trace on $L(\Gamma)$. Fix $\varepsilon > 0$. Since the linear span of $\{\lambda(g) : g \in \Gamma\}$ is $\|\cdot\|_2$-dense in $L(\Gamma)$, there exist a finite subset $\fF \subset \Gamma \setminus \{e\}$ and complex coefficients $\{c_s\}_{s \in \fF}$ such that
\[
\left\| a - \left( \sum_{s \in \fF} c_s \lambda(s) + \tau(a) 1 \right) \right\|_2 < \varepsilon.
\]
Set $b := \sum_{s \in \fF} c_s \lambda(s) + \tau(a) 1$.
Apply Lemma \ref{lem:suzuki} to the essentially free action and the finite subset $\fF$ to produce non-negative functions $f_1, \dots, f_m \in L^\infty(X, \mu)^+$ with
\[
\sum_{j=1}^m f_j^2 = 1, \qquad (s \cdot f_j) f_j = 0 \quad \text{for all } s \in \fF, \, j = 1, \dots, m.
\]
Define $\psi \colon L(\Gamma) \to L(\Gamma)$ by
\[
\psi(x) := \sum_{j=1}^m \theta(f_j) \, x \, \theta(f_j).
\]
Since $\theta$ is a unital $*$-homomorphism, $\psi$ is completely positive and unital; indeed,
\[
\psi(1) = \sum_{j=1}^m \theta(f_j)^2 = \theta\!\left( \sum_{j=1}^m f_j^2 \right) = \theta(1) = 1.
\]
By the Russo--Dye theorem (see, e.g., \cite[Corollary 2.9]{Paulsen02}), unital positive maps between C*-algebras have norm equal to $1$, so $\|\psi\| = 1$. Moreover, $\psi$ is trace-preserving: for any $x \in L(\Gamma)$, using the trace property,
\[
\tau(\psi(x)) = \sum_{j=1}^m \tau(\theta(f_j) x \theta(f_j)) = \tau\!\left( x \sum_{j=1}^m \theta(f_j)^2 \right) = \tau(x).
\]
Since $\psi$ is unital completely positive, the Kadison--Schwarz inequality $\psi(x)^* \psi(x) \le \psi(x^* x)$ together with trace-preservation gives
\[
\|\psi(x)\|_2^2 = \tau(\psi(x)^* \psi(x)) \le \tau(\psi(x^* x)) = \tau(x^* x) = \|x\|_2^2.
\]
Thus, $\psi$ is a $\|\cdot\|_2$-contraction. Since $a \in \cM$ and $\theta(f_j) \in Z(\cM)$, each $\theta(f_j)$ commutes with $a$. Hence
\[
\psi(a) = \sum_{j=1}^m \theta(f_j) a \theta(f_j) = a \sum_{j=1}^m \theta(f_j)^2 = a.
\]
For $s \in \fF$, the $\Gamma$-equivariance of $\theta$ gives $\lambda(s) \theta(f_j) \lambda(s)^* = \theta(s \cdot f_j)$, hence $\lambda(s) \theta(f_j) = \theta(s \cdot f_j) \lambda(s)$. Therefore
\[
\psi(\lambda(s)) = \sum_{j=1}^m \theta(f_j) \lambda(s) \theta(f_j) = \sum_{j=1}^m \theta(f_j (s \cdot f_j)) \lambda(s) = 0,
\]
since $f_j (s \cdot f_j) = 0$ by construction. By linearity, $\psi(b) = \tau(a) 1$. Combining all of the above and the contraction property of $\psi$,
\[
\|a - \tau(a) 1\|_2 = \|\psi(a) - \psi(b)\|_2 = \|\psi(a - b)\|_2 \le \|a - b\|_2 < \varepsilon.
\]
Since $\varepsilon > 0$ was arbitrary, we conclude $a = \tau(a) 1$. As $a \in \cM$ was arbitrary, $\cM = \C \cdot 1$, contradicting $\cM \neq \C$.
\end{proof}

We can now deduce a clean factoriality statement for invariant subalgebras.

\begin{theorem}\label{thm:factoriality}
Let $\Gamma$ be a countable, i.c.c., simple group with the property that every faithful ergodic measure-preserving action of $\Gamma$ on a standard probability space is essentially free. Then every $\Gamma$-invariant von Neumann subalgebra $\cA \le L(\Gamma)$ is a factor, i.e., $Z(\cA) = \C \cdot 1$.
\end{theorem}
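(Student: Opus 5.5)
Assume towards a contradiction that $Z(\cA) \neq \C \cdot 1$. Then $\cA \neq \C$, and we aim to contradict Proposition \ref{prop:no-equivariant-hom} by constructing a $\Gamma$-equivariant, state-preserving unital embedding of an abelian algebra into $Z(\cA)$.

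First, $Z(\cA)$ is $\Gamma$-invariant: since each $\mathrm{Ad}\,\lambda(g)$ is an automorphism of $L(\Gamma)$ preserving $\cA$, it preserves the center of $\cA$. Moreover, $\tau$ is $\mathrm{Ad}\,\lambda(g)$-invariant. Choose a separable, $\Gamma$-invariant, unital abelian von Neumann subalgebra $B \subseteq Z(\cA)$ with $B \neq \C$. For instance, take a nontrivial projection $p \in Z(\cA)$ and let $B$ be generated by $\{\lambda(g)p\lambda(g)^* : g\in\Gamma\}$; this is countably generated because $\Gamma$ is countable. Then $(B,\tau|_B)$ is isomorphic to $L^\infty(Y,\nu)$ for a standard probability space $(Y,\nu)$. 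The conjugation action of $\Gamma$ on $B$ becomes a measure-preserving action $\Gamma \curvearrowright (Y,\nu)$, and the inclusion $\theta\colon L^\infty(Y,\nu)\cong B\hookrightarrow Z(\cA)$ is equivariant, unital, and state-preserving.

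The main step is to arrange that this action is faithful and ergodic, so that the hypothesis gives essential freeness. I handle the two properties separately.

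\textbf{Faithfulness.} The kernel of $\Gamma\to\mathrm{Aut}(Y,\nu)$ is a normal subgroup, so by simplicity it is either trivial or all of $\Gamma$. Suppose it is all of $\Gamma$. Then every element of $B$ commutes with every $\lambda(g)$, so $B \subseteq L(\Gamma)' \cap L(\Gamma) = \C$, using that $L(\Gamma)$ is a factor because $\Gamma$ is i.c.c. This contradicts $B\ne\C$, so the action is faithful.

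\textbf{Ergodicity.} Ergodicity is the real obstacle, and I would handle it by ergodic decomposition. The fixed-point algebra $B^\Gamma$ consists of elements commuting with all $\lambda(g)$, so again $B^\Gamma \subseteq L(\Gamma)\cap L(\Gamma)' = \C$. Hence the action is already ergodic, since ergodicity means exactly that the invariant functions are constants.

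With the action on $(Y,\nu)$ faithful and ergodic, the hypothesis says it is essentially free. Proposition \ref{prop:no-equivariant-hom}, applied with $\cM=\cA\neq\C$, then forbids the existence of $\theta$. This is a contradiction, so $Z(\cA)=\C\cdot 1$. One minor point to verify along the way is that the conjugation action on $B$ really comes from a measure-preserving action on a standard space. This follows from the standard correspondence between trace-preserving actions on separable abelian von Neumann algebras and measure-preserving actions, via a pointwise realization of the countable group action.
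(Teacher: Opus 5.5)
Your proof is correct and follows essentially the same route as the paper: realize a $\Gamma$-invariant abelian part of $Z(\cA)$ as $L^\infty$ of a standard probability space, obtain ergodicity and faithfulness from triviality of $Z(L(\Gamma))$ together with simplicity, invoke the hypothesis to get essential freeness, and contradict Proposition~\ref{prop:no-equivariant-hom}. The differences are cosmetic. Passing to the separable subalgebra $B$ is unnecessary, since $Z(\cA)\subseteq B(\ell^2\Gamma)$ already has separable predual (the paper uses all of $Z(\cA)$). The mention of ergodic decomposition is superfluous, because your direct fixed-point argument already gives ergodicity.
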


\begin{proof}
Let $\cA \le L(\Gamma)$ be $\Gamma$-invariant. Since $\cA$ is invariant under conjugation by $\Gamma$, its center $Z(\cA)$ is also $\Gamma$-invariant. By the spectral theorem, we may identify $Z(\cA) \cong L^\infty(X, \nu)$ for some standard probability measure space $(X, \nu)$. Let $\Phi \colon L^\infty(X,\nu) \xrightarrow{\sim} Z(\cA)$ denote the 
spectral isomorphism. We equip $(X,\nu)$ with the $\Gamma$-action $\sigma$ 
uniquely determined by 
\[
  \Phi(\sigma_g(f)) \;=\; \lambda(g)\,\Phi(f)\,\lambda(g)^*, 
  \qquad g \in \Gamma,\ f \in L^\infty(X,\nu).
\]
That $\sigma$ acts by a measure-preserving transformation follows from the 
fact that conjugation by $\lambda(g)$ preserves the canonical trace 
restricted to $Z(\cA)$. With this action, the inclusion 
$\theta \colon L^\infty(X,\nu) \hookrightarrow \cA$ obtained by composing 
$\Phi$ with $Z(\cA) \hookrightarrow \cA$ is tautologically 
$\Gamma$-equivariant.

Suppose, towards a contradiction, that $Z(\cA) \neq \C$. We claim first that the action $\Gamma \curvearrowright (X, \nu)$ is ergodic. Indeed, if $f \in Z(\cA)$ is fixed by conjugation by every $\lambda(g)$, $g \in \Gamma$, then $f$ commutes with every element of $L(\Gamma)$, so $f \in L(\Gamma)' \cap L(\Gamma) = Z(L(\Gamma)) = \C$ since $\Gamma$ is i.c.c. Thus, the only $\Gamma$-fixed elements of $Z(\cA)$ are scalars, which is precisely ergodicity.

Since $Z(\cA) \neq \C$, the space $(X, \nu)$ is non-trivial, hence the action of $\Gamma$ on $(X, \nu)$ is non-trivial. The kernel of this action is a normal subgroup of $\Gamma$; by simplicity, it is either $\{e\}$ or $\Gamma$. The latter would force the action to be trivial, contradicting non-triviality (or, equivalently, ergodicity on a non-point space). Hence, the action is faithful. By hypothesis, it is then essentially free.

We now apply Proposition \ref{prop:no-equivariant-hom} with $\cM = \cA$ and $\theta \colon L^\infty(X, \nu) \xrightarrow{\;\sim\;} Z(\cA) \hookrightarrow \cA$ the canonical (identity) inclusion. $\theta$ is equivariant and state-preserving, and the action is essentially free. We need also that $\cA \neq \C$; but if $\cA = \C$ then $Z(\cA) = \C$, contradicting our assumption. Therefore $\cA \neq \C$, and Proposition \ref{prop:no-equivariant-hom} produces the contradiction.

Hence $Z(\cA) = \C$, proving that $\cA$ is a factor.
\end{proof}

\begin{corollary}\label{cor:F-factor}
Let $\cA \le L([F,F])$ be a von Neumann subalgebra invariant under conjugation by $[F,F]$. Then $\cA$ is a factor.
\end{corollary}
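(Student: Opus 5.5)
This is a direct application of Theorem~\ref{thm:factoriality} with $\Gamma = [F,F]$. The plan is to check its three hypotheses on $\Gamma$ and to note that invariance under conjugation by $[F,F]$ is exactly $\Gamma$-invariance inside $L(\Gamma)$.

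First, $[F,F]$ is countable, since $F$ is finitely generated. By Proposition~\ref{prop:F-structure}(2) it is simple and infinite. It is also i.c.c., as remarked after Proposition~\ref{prop:F-structure}. If one wants a self-contained justification of the i.c.c.\ property, take $g \in [F,F]\setminus\{e\}$. Pick a point $x$ in the open set $\supp(g)$ and a small dyadic interval $I \ni x$ inside $\supp(g)$ with $g(I)\cap I=\emptyset$ (shrinking $I$ if necessary). Elements $h \in [F,F]$ supported in compact subintervals of $(0,1)$ can move $I$ to infinitely many pairwise disjoint positions while fixing the rest suitably. The conjugates $hgh^{-1}$ then have supports $h(\supp(g))$ with distinct configurations, so the conjugacy class of $g$ is infinite. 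In particular $L([F,F])$ is a $\mathrm{II}_1$ factor.

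Second, the dynamical hypothesis is precisely the second assertion of Theorem~\ref{thm:DM}: every faithful ergodic measure-preserving action of $[F,F]$ on a standard probability space is essentially free. This follows from the compressibility of the action $[F,F]\curvearrowright(0,1)$ established by Dudko--Medynets.

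With these hypotheses verified, let $\cA \le L([F,F])$ be invariant under conjugation by $\lambda(g)$ for all $g\in[F,F]$. This is exactly the notion of a $\Gamma$-invariant von Neumann subalgebra of $L(\Gamma)$ for $\Gamma=[F,F]$. Theorem~\ref{thm:factoriality} then yields $Z(\cA)=\C\cdot 1$.

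There is no real obstacle here. The only point requiring care is the i.c.c.\ property of $[F,F]$, and that is already recorded in the paper.
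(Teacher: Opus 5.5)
Your proposal is correct and follows the paper's proof exactly. Like the paper, you verify that $\Gamma=[F,F]$ is countable, simple, i.c.c., and has the essential-freeness property from Theorem~\ref{thm:DM}, and then apply Theorem~\ref{thm:factoriality}. Your optional dynamical sketch of the i.c.c.\ property is looser than necessary; the paper simply uses that an infinite simple group is automatically i.c.c.\ (a nontrivial element with finite conjugacy class would have a finite-index centralizer whose normal core must be the whole group, making the element central and the group abelian, hence finite).
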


\begin{proof}
By Proposition \ref{prop:F-structure}, the group $[F,F]$ is simple, infinite, hence i.c.c. By Theorem \ref{thm:DM}, every faithful ergodic measure-preserving action of $[F,F]$ on a standard probability space is essentially free. The hypotheses of Theorem \ref{thm:factoriality} are therefore satisfied with $\Gamma = [F,F]$, and the conclusion follows.
\end{proof}

\section{Relative ISR for Thompson's group \texorpdfstring{$F$}{}}\label{sec:main}

We now turn to the main theorem. The argument proceeds in two steps: first, a boundary Fourier coefficient lemma which yields that the conditional expectation $\EE_{L([F,F])}$ preserves any $[F,F]$-invariant von Neumann subalgebra of $L(F)$; and second, a dichotomy based on whether the resulting factor $\cM \cap L([F,F])$ is trivial or not.

\subsection{The boundary Fourier coefficient lemma}

\begin{lemma}\label{lem:boundary}
Let $x \in L(F)$, and let $\{g_k\} \subset F$ satisfy $\supp(g_k) \to 1$ in the sense that $\supp(g_k) \neq \emptyset$ and $\supp(g_k) \subset (1-\varepsilon_k, 1)$ for some $\varepsilon_k \to 0$. Then $\lambda(g_k)\, x\, \lambda(g_k)^*$ converges in the weak operator topology to an element $\tilde{x} \in L(F)$ whose Fourier coefficients are
\[
\tilde{x}_h = \begin{cases} x_h & \text{if } h'(1) = 1, \\ 0 & \text{if } h'(1) \neq 1, \end{cases} \qquad h \in F.
\]
The analogous statement holds for $\{f_k\} \subset F$ with $\supp(f_k) \to 0$, with $h'(0)$ in place of $h'(1)$.
\end{lemma}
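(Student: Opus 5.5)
Since conjugation is $\|\cdot\|_2$-isometric and the sequence $\lambda(g_k)x\lambda(g_k)^*$ is bounded in operator norm by $\|x\|$, it suffices to identify the limit through Fourier coefficients, via the vectors $\lambda(g_k)x\lambda(g_k)^*\delta_e\in\ell^2 F$. Concretely: first define $\tilde x:=\sum_{h\,:\,h'(1)=1} x_h\lambda(h)$. Since $\{h: h'(1)=1\}$ is a subgroup $\Lambda_1$ of $F$ (the kernel of the homomorphism $h\mapsto \log_2 h'(1)$), we have $\tilde x=\EE_{L(\Lambda_1)}(x)\in L(F)$, and in particular $\|\tilde x\|\le\|x\|$. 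By uniform boundedness, WOT convergence reduces to checking $\langle \lambda(g_k)x\lambda(g_k)^*\delta_a,\delta_b\rangle\to\langle\tilde x\delta_a,\delta_b\rangle$ for $a,b\in F$. Right translations commute with $L(F)$, so this matrix coefficient equals the Fourier coefficient at $ba^{-1}$. It therefore suffices to prove coefficientwise convergence: $\tau(\lambda(g_k)x\lambda(g_k)^*\lambda(h)^*)\to \tilde x_h$ for each $h\in F$.

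Now $(\lambda(g_k)x\lambda(g_k)^*)_h = x_{g_k^{-1}hg_k}$. There are two cases.

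\emph{Case $h'(1)=1$.} Then $h$ is the identity on some interval $(1-\delta,1]$. Choose $k$ large so that $\varepsilon_k<\delta$. Because $g_k$ preserves $(1-\varepsilon_k,1)$ and is the identity elsewhere, and $h$ is the identity on $(1-\varepsilon_k,1)$, the supports of $h$ and $g_k$ are disjoint, so $g_k$ and $h$ commute. Hence $x_{g_k^{-1}hg_k}=x_h$ eventually.

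\emph{Case $h'(1)\neq 1$.} Here I claim the elements $g_k^{-1}hg_k$ are pairwise distinct along a subsequence, or more precisely that each fixed $u\in F$ equals $g_k^{-1}hg_k$ for only finitely many $k$. Granting this, $\sum_k |x_{g_k^{-1}hg_k}|^2$ is controlled by $\|x\|_2^2$ times a bounded multiplicity, so $x_{g_k^{-1}hg_k}\to 0$. (Alternatively, square-summability of the coefficients gives that for any $\eta>0$ only finitely many $u$ have $|x_u|>\eta$, and each such $u$ is hit only finitely often.)

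The main obstacle is this finiteness claim, which I prove as follows. Suppose $g_k^{-1}hg_k=u$ for infinitely many $k$. Since $h'(1)\ne1$, $h$ is linear near $1$: $h(t)=1-c(1-t)$ with $c\neq1$ on some $(1-\delta,1]$, and similarly $u$ is linear near $1$ with the same slope $c$ (conjugation preserves $h'(1)$). Take $k$ with $\varepsilon_k$ tiny. Since $g_k\neq\mathrm{id}$, there is a point $t\in(1-\varepsilon_k,1)$ that is a boundary point of $\supp g_k$, so $g_k(t)=t$, yet $g_k$ is nontrivial arbitrarily close to $t$ on one side. Near $t$, which lies in the linear region of $h$ and $u$, compare $g_kug_k^{-1}$ with $h$: the relation forces $g_k$ to commute with the affine dilation $h$ near $1$. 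But $g_k$ is the identity on $[0,1-\varepsilon_k]$ and $h^{\pm n}$ maps that region onto neighborhoods reaching arbitrarily close to $1$, so commuting with $h$ propagates the identity and gives $g_k=\mathrm{id}$ near $1$, a contradiction. More simply: $g_k$ is the identity near $1$ as an element of $F$ with compact support in $(0,1)$, while $h$ is a contraction or expansion toward $1$; $g_kh g_k^{-1}=h$ on $(1-\delta,1)$ would make $\supp g_k\cap(1-\delta,1)$ an $h$-invariant set. This set is nonempty and bounded away from $1$, hence impossible. I then extend this to show that distinct $k$ with equal $\varepsilon$-scale give distinct conjugates. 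If this argument is cleaner with $k$ passed to a subsequence, I will note that WOT limits of every subsequence agree, which suffices. The $0$-endpoint statement is symmetric.
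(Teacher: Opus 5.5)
Your reduction of WOT convergence to coefficientwise convergence is correct: you fix the limit in advance as $\EE_{L(\Lambda_1)}(x)$ with $\Lambda_1=\{h:h'(1)=1\}$ and check matrix coefficients on $\delta_a,\delta_b$. This is tidier than the paper's route (WOT-compactness of the ball, then identifying subsequential limits), because it gives convergence of the whole sequence directly. The case $h'(1)=1$ is fine. Your second way of getting $x_{g_k^{-1}hg_k}\to0$ from the finiteness claim is correct: only finitely many $u$ have $|x_u|>\eta$, and each is hit finitely often. The ``bounded multiplicity'' version is not justified and should be dropped.

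\textbf{The gap is in the finiteness claim, which is the heart of the lemma.} You assert that ``the relation forces $g_k$ to commute with $h$ near $1$'', but give no reason. A single relation $g_k^{-1}hg_k=u$ carries no commutation information, since $u$ could be any conjugate of $h$. Looking near a boundary point $t$ of $\supp g_k$ does not change that. The missing idea is to use more than one index. Two ways to close it:
\begin{itemize}
\item \emph{The paper's route.} If $g_k^{-1}hg_k=g_l^{-1}hg_l$, then $f=g_lg_k^{-1}$ commutes with $h$, and $\supp f\subset(1-\delta,1)$ once $\varepsilon_k,\varepsilon_l<\delta$. So $h$ preserves $\supp f$. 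If $f\neq e$, then $\inf\supp f\in[1-\delta,1)$ is a fixed point of $h$, which is impossible. Hence $g_k=g_l$. Finally, a fixed nontrivial element can equal $g_k$ for only finitely many $k$, because $\varepsilon_k\to0$.
\item \emph{Directly in your setup.} Suppose $u=g_k^{-1}hg_k$ for infinitely many $k$. For each $t<1$, take such a $k$ with $t,h(t)\le1-\varepsilon_k$; then $g_k$ fixes both points, so $u(t)=h(t)$. Hence $u=h$, and only now do these $g_k$ commute with $h$.
\end{itemize}

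Your ``more simply'' remark that $g_k$ is the identity near $1$ is false for $g_k\in F$. For example, take the identity on $[0,1-2^{-n}]$ and, on $[1-2^{-n},1]$, the rescaled standard generator with slopes $\tfrac12,1,2$. Its support is $(1-2^{-n},1)$ and its slope at $1$ is $2$. The remark holds only in the application, where $g_k\in[F,F]$. You do not need it: the infimum argument rules out any nonempty $h$-invariant subset of $(1-\delta,1)$, whether or not it is bounded away from $1$.
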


\begin{proof}
Set $x_k := \lambda(g_k)\, x\, \lambda(g_k)^*$. Each $x_k$ has operator norm $\|x\|$, so the sequence lies in the WOT-compact ball $\overline{B}(0, \|x\|) \subset L(F)$, where WOT stands for the weak operator topology.  Let $\{x_{k_i}\}$ be any WOT-convergent subsequence with limit $\tilde{x} \in L(F)$ (the algebra is WOT-closed). For each $h \in F$, the functional $y \mapsto \tau(y\,\lambda(h)^*) = \langle y\delta_e, \delta_h\rangle$ is WOT-continuous, and the trace property gives
\begin{align*}
\tau\!\left(\lambda(g)\, x\, \lambda(g)^* \lambda(h)^*\right) &= \tau\!\left(x\, \lambda(g)^* \lambda(h)^* \lambda(g)\right) = \tau\!\left(x\, \lambda(g^{-1} h g)^*\right) =x_{g^{-1} h g}.
\end{align*}
Hence
\begin{equation}\label{eq:fourier-limit}
\tilde{x}_h \;=\; \lim_{i \to \infty} x_{\, g_{k_i}^{-1} h\, g_{k_i}}.
\end{equation}
\medskip
\noindent\textbf{Case 1: $h'(1) = 1$.} Every element of $F$ is piecewise affine with finitely many dyadic breakpoints and slopes that are integer powers of $2$. On the rightmost piece, the slope equals $h'(1) = 1$; combined with $h(1) = 1$, this forces $h(x) = x$ on some interval $[1 - \delta, 1]$. Once $\varepsilon_{k_i} < \delta$, the support of $g_{k_i}$ lies in a region where $h$ is the identity, so $h$ and $g_{k_i}$ commute, $g_{k_i}^{-1} h g_{k_i} = h$, and equation~\eqref{eq:fourier-limit} gives $\tilde{x}_h = x_h$.

\medskip
\noindent\textbf{Case 2: $h'(1) \neq 1$.} Write $h'(1) = 2^m$ with $m \in \mathbb{Z} \setminus \{0\}$. Choose $\delta > 0$ small enough so that $h$ is affine on $[1 - \delta, 1]$:
\[
h(x) \;=\; 1 - 2^m(1 - x), \qquad x \in [1 - \delta, 1].
\]
On $[1 - \delta, 1)$ the equation $h(x) = x$ reduces to $(1 - x)(1 - 2^m) = 0$, which forces $x = 1$. Thus $h$ has no fixed points in $[1 - \delta, 1)$.
We now claim that if $f \in F$ commutes with $h$ and $\supp(f) \subset (1 - \delta, 1)$, then $f = e$.
Indeed, $fh = hf$ implies $h(\supp(f)) = \supp(f)$. If $f \neq e$, let $a := \inf \supp(f) \in [1 - \delta, 1)$. Since $h$ is an orientation-preserving homeomorphism of $[0,1]$ and preserves $\supp(f)$ as a set, $h(a) = a$, contradicting the absence of fixed points of $h$ in $[1 - \delta, 1)$.

Further, since $\supp(g_{k_i})\to 1$, passing to a subsequence if necessary, we may assume that $g_{k_i}$ are pairwise distinct. Let $s_i := g_{k_i}^{-1}\, h\, g_{k_i}$. For any $i, j$ with $\varepsilon_{k_i}, \varepsilon_{k_j} < \delta$, suppose $s_i = s_j$. Then $f := g_{k_j}\, g_{k_i}^{-1}$ commutes with $h$ and $\supp(f) \subset (1 - \delta, 1)$. Since $fh = hf$, $h$ preserves $\supp(f)$ as a set; if $f \neq e$, then $\inf \supp(f) \in [1 - \delta, 1)$ is a fixed point of $h$, contradicting the choice of $\delta$. Hence $f = e$, which contradicts $g_{k_i}$ being pairwise distinct. Therefore, $s_i$ are pairwise distinct.
Since $x \in L(F) \hookrightarrow \ell^2(F)$, the family $(x_g)_{g \in F}$ is square-summable, hence $x_{s_i} \to 0$. By equation~\eqref{eq:fourier-limit}, $\tilde{x}_h = 0$.

The case $\supp(f_k) \to 0$ is identical with $0$ in place of $1$: choose $\delta$ such that $h$ is affine on $[0, \delta]$, the equation $h(x) = x$ has no solutions in $(0, \delta]$, and the centralizer fact runs verbatim.
\end{proof}

\begin{proof}[Proof of Theorem~\ref{thm:mainintro}]
We first show that $\EE_{L([F,F])}$ preserves $\cM$. Let $x \in \cM$. Since $\cM$ is $[F,F]$-invariant, $\lambda(g) x \lambda(g)^* \in \cM$ for every $g \in [F,F]$. Choose any sequence $\{g_k\} \subset [F,F]$ with $\supp(g_k) \to 1$ (such sequences exist by Proposition \ref{prop:F-structure}(2)). By Lemma \ref{lem:boundary}, a subsequence of $\{\lambda(g_k) x \lambda(g_k)^*\}$ converges WOT to an element $\tilde{x} \in L(F)$ whose Fourier coefficients vanish outside $\{h \in F : h'(1) = 1\}$. Since $\cM$ is WOT-closed and contains the subsequence, $\tilde{x} \in \cM$.

Now choose a sequence $\{f_k\} \subset [F,F]$ with $\supp(f_k) \to 0$ and apply Lemma \ref{lem:boundary} to $\tilde{x}$. A subsequence of $\{\lambda(f_k) \tilde{x} \lambda(f_k)^*\}$ converges WOT to an element $\tilde{\tilde{x}} \in \cM$ whose Fourier coefficients vanish outside $\{h \in F : h'(0) = 1 \text{ and } h'(1) = 1\}$. By Proposition \ref{prop:F-structure}(2), this last set is exactly $[F,F]$. The element $\tilde{\tilde{x}}$ is therefore the unique element of $L([F,F])$ with the same Fourier coefficients as $x$ on $[F,F]$; that is, $\tilde{\tilde{x}} = \EE_{L([F,F])}(x)$. We conclude that $\EE_{L([F,F])}(x) \in \cM$ for all $x \in \cM$, hence
\[
\EE_{L([F,F])}(\cM) \subseteq \cM \cap L([F,F]).
\]
The reverse inclusion is immediate, so $\EE_{L([F,F])}(\cM) = \cM \cap L([F,F])$. 

We now leverage the character-theoretic input of Dudko--Jiang \cite{DJ24}. 
First, recall that by \cite{DM14}, $[F,F]$ has only two indecomposable characters: the trivial one and the regular one. In particular, $[F,F]$ has the non-factorizable regular character property (see \cite[Proposition~3.10(1)]{DJ24}).
Moreover, $[F,F]$ is i.c.c.\ by Proposition~\ref{prop:F-structure}. The intersection $\cM \cap L([F,F])$ is a $[F,F]$-invariant von Neumann subalgebra of $L([F,F])$, and by Corollary~\ref{cor:F-factor} it is a factor. Applying \cite[Theorem~3.3]{DJ24} to the i.c.c.\ group $[F,F]$ together with this $[F,F]$-invariant subfactor of $L([F,F])$, we conclude that $\cM \cap L([F,F]) = L(H)$ for some normal subgroup $H \trianglelefteq [F,F]$. Since $[F,F]$ is simple (Proposition~\ref{prop:F-structure}), $H = \{e\}$ or $H = [F,F]$, and therefore
\[
\cM \cap L([F,F]) = L([F,F]) \quad \text{or} \quad \cM \cap L([F,F]) = \C.
\]
Suppose first that $\cM \cap L([F,F]) = L([F,F])$. Then $L([F,F]) \subseteq \cM \subseteq L(F)$. Since $[F,F] \trianglelefteq F$ and the abelianization $F/[F,F] \cong \Z^2$ is abelian, the standard Galois correspondence for intermediate von Neumann subalgebras between $L([F,F])$ and $L(F)$ (see \cite[Proposition~4.4]{bedos2023c}, also see \cite[Lemma~3.3]{jiang2021maximal}) asserts that every such intermediate subalgebra is of the form $L(N)$ for some subgroup $[F,F] \le N \le F$. 

Assume now that $\cM \cap L([F,F]) = \C$. We show that this forces $\cM = \C$, which corresponds to the trivial normal subgroup $N = \{e\}$. For every $x \in \cM$, $$\EE_{L([F,F])}(x) \in \EE_{L([F,F])}(\cM)=\cM \cap L([F,F]) = \C.$$ Since $\EE_{L([F,F])}$ is trace-preserving, $\EE_{L([F,F])}(x) = \tau(x) 1$. Hence, for every $n \in [F,F] \setminus \{e\}$ and every $x \in \cM$,
\[
\tau(x \lambda(n)^*) = \tau(\EE_{L([F,F])}(x \lambda(n)^*)) = \tau(\EE_{L([F,F])}(x) \lambda(n)^*) = \tau(x) \tau(\lambda(n)^*) = 0.
\]
This, in particular, implies that
\[
\EE_{\cM}(\lambda(n)) = 0 \quad \text{for all } n \in [F,F] \setminus \{e\}.
\]
We now invoke \cite[Lemma 3.6]{Amr26} with $K := [F,F] \setminus \{e\}$ and $N=[F,F]$. The vanishing condition $\EE_{\cM}(\lambda(s)) = 0$ for $s \in K$ was just established. It remains to verify the thickness of $K$ as defined in \cite[Definition 3.5]{Amr26}. Namely, for any $g \in F \setminus \{e\}$ we need to find an infinite sequence $\{n_i\}_{i \ge 1} \subset [F,F]$ such that $w_{i,j} \in [F,F] \setminus \{e\}$ for all $i \neq j$, where \[
w_{i,j} = (n_j^{-1} g n_j) (n_i^{-1} g^{-1} n_i)
.\] Since $[F,F] \trianglelefteq F$, for any $n_i, n_j \in [F,F]$, the element $w_{i,j}$
lies in the coset $g \cdot g^{-1} \cdot [F,F] = [F,F]$ (computing modulo $[F,F]$). It is sufficient, therefore, to arrange $w_{i,j} \neq e$. Suppose $w_{i,j} = e$. Then $n_j^{-1} g n_j = n_i^{-1} g n_i$, which is equivalent to $n_i n_j^{-1} \in C_F(g)$, where $C_F(g)$ denotes the centralizer of $g$ in $F$. Since $n_i, n_j \in [F,F]$, we have $n_i n_j^{-1} \in C_{[F,F]}(g) := [F,F] \cap C_F(g)$. Thus it suffices to choose $\{n_i\}_{i \ge 1}$ representing distinct right cosets of $C_{[F,F]}(g)$ in $[F,F]$. This is possible whenever $C_{[F,F]}(g)$ has infinite index in $[F,F]$, which we now verify.

Suppose for contradiction that $C_{[F,F]}(g)$ has finite index $m$ in $[F,F]$. The action of $[F,F]$ by left multiplication on the finite coset space $[F,F]/C_{[F,F]}(g)$ yields a group homomorphism $\rho \colon [F,F] \to S_m$. Its kernel is a normal subgroup of $[F,F]$, which is simple by Proposition~\ref{prop:F-structure}, so $\ker \rho = \{e\}$ or $\ker \rho = [F,F]$. The former is impossible, since $[F,F]$ is infinite and cannot embed into the finite group $S_m$. Hence $\ker \rho = [F,F]$, so $\rho$ is trivial; as the left-multiplication action on $[F,F]/C_{[F,F]}(g)$ is transitive, this forces $m = 1$, i.e.\ $C_{[F,F]}(g) = [F,F]$. But then $g$ commutes with every element of $[F,F]$, so $g \in C_F([F,F])$. Since $[F,F]$ acts on $(0,1)$ without a common fixed point and contains elements supported arbitrarily close to each endpoint, its centralizer in $F$ is trivial: $C_F([F,F]) = \{e\}$. This contradicts $g \neq e$. Hence $C_{[F,F]}(g)$ has infinite index in $[F,F]$.

We may therefore select an infinite sequence $\{n_i\}_{i \ge 1} \subset [F,F]$ from pairwise distinct right cosets of $C_{[F,F]}(g)$. For this sequence, $n_i n_j^{-1} \notin C_{[F,F]}(g)$ for $i \neq j$, so $w_{i,j} \neq e$. As argued above, $w_{i,j} \in [F,F] = K \cup \{e\}$, so $w_{i,j} \in K$ for all $i \neq j$. By \cite[Lemma 3.6]{Amr26}, $\cM = \C $, which is of the required form. This finishes the proof.
\end{proof}
\bibliographystyle{amsalpha}
\bibliography{biblio}

@article{AB21,
  author = {Alekseev, V. and Brugger, R.},
  title = {A rigidity result for normalized subfactors},
  journal = {Journal of Operator Theory},
  volume = {86},
  number = {1},
  year = {2021},
  pages = {3--15}
}

@article{ADJS25,
  author = {Amrutam, T. and Dudko, A. and Jiang, Y. and Skalski, A.},
  title = {Invariant subalgebras rigidity for von {N}eumann algebras of groups arising as certain semidirect products},
  journal = {arXiv preprint arXiv:2507.12824},
  year = {2025},
  pages={32}
}

@article{jiang2021maximal,
  title={Maximal von Neumann subalgebras arising from maximal subgroups},
  author={Jiang, Y.},
  journal={Science China Mathematics},
  volume={64},
  number={10},
  pages={2295--2312},
  year={2021},
  publisher={Springer}
}

@article{AJ23,
  author = {Amrutam, T. and Jiang, Y.},
  title = {On invariant von {N}eumann subalgebras rigidity property},
  journal = {Journal of Functional Analysis},
  volume = {284},
  number = {5},
  year = {2023},
  pages = {109804}
}

@article{AJ26,
  author = {Amrutam, T. and Jiang, Y.},
  title = {Invariant {C}{*}-subalgebras of the reduced group {C}{*}-algebra},
  journal = {Journal of Functional Analysis},
  volume = {291},
  number = {1},
  year = {2026},
  pages = {111471}
}

@article{Amr26,
  author = {Amrutam, T.},
  title = {On relative invariant subalgebra rigidity property},
  journal = {arXiv preprint},
  year = {2026},
  pages={24}
}

@article{Brown87,
  author = {Brown, K.},
  title = {Finiteness properties of groups},
  journal = {Journal of Pure and Applied Algebra},
  volume = {44},
  number = {1--3},
  year = {1987},
  pages = {45--75}
}

@article{CFP96,
  author = {Cannon, J.W. and Floyd, W.J. and Parry, W.R.},
  title = {Introductory notes on {R}ichard {T}hompson's groups},
  journal = {Enseign. Math.},
  volume = {42},
  number = {3--4},
  year = {1996},
  pages = {215--256}
}

@article{CD20,
  author = {Chifan, I. and Das, S.},
  title = {Rigidity results for von {N}eumann algebras arising from mixing extensions of profinite actions of groups on probability spaces},
  journal = {Mathematische Annalen},
  volume = {378},
  number = {3},
  year = {2020},
  pages = {907--950}
}

@article{CDS23,
  author = {Chifan, I. and Das, S. and Sun, B.},
  title = {Invariant subalgebras of von {N}eumann algebras arising from negatively curved groups},
  journal = {Journal of Functional Analysis},
  volume = {285},
  number = {9},
  year = {2023},
  pages = {110098}
}

@article{DJ24,
  author = {Dudko, A. and Jiang, Y.},
  title = {A character approach to the {ISR} property},
  journal = {arXiv preprint arXiv:2410.14517},
  year = {2024},
  pages={40}
}

@article{bedos2023c,
  title={{C}{*}-irreducibility for reduced twisted group {C}{*}-algebras},
  author={B{\'e}dos, E. and Omland, T.},
  journal={Journal of Functional Analysis},
  volume={284},
  number={5},
  pages={109795},
  year={2023},
  publisher={Elsevier}
}

@article{DM14,
  author = {Dudko, A. and Medynets, K.},
  title = {Finite factor representations of {H}igman--{T}hompson groups},
  journal = {Groups, Geometry, and Dynamics},
  volume = {8},
  number = {2},
  year = {2014},
  pages = {375--389}
}

@article{JL26a,
  author = {Jiang, Y. and Li, H.},
  title = {Classification of invariant subalgebras in a class of factors with property ({T})},
  journal = {arXiv preprint arXiv:2601.06353},
  year = {2026},
  pages={27},
}

@article{JL26b,
  author = {Jiang, Y. and Liu, R.},
  title = {On invariant subalgebras when the {ISR} property fails},
  journal = {J. Operator Theory},
  volume = {95},
  number = {1},
  year = {2026},
  pages = {103--117}
}

@article{JZ24,
  author = {Jiang, Y. and Zhou, X.},
  title = {An example of an infinite amenable group with the {ISR} property},
  journal = {Mathematische Zeitschrift},
  volume = {307},
  number = {2},
  year = {2024},
  pages = {23}
}

@article{KP23,
  author = {Kalantar, M. and Panagopoulos, N.},
  title = {On invariant subalgebras of group and von {N}eumann algebras},
  journal = {Ergodic Theory and Dynamical Systems},
  volume = {43},
  number = {10},
  year = {2023},
  pages = {3341--3353}
}

@book{Paulsen02,
  author = {Paulsen, V.},
  title = {Completely Bounded Maps and Operator Algebras},
  publisher = {Cambridge University Press},
  series = {Cambridge Studies in Advanced Mathematics},
  volume = {78},
  year = {2002}
}

@article{Popa07,
  author = {Popa, S.},
  title = {Deformation and rigidity for group actions and von {N}eumann algebras},
  journal = {International Congress of Mathematicians. Vol. I},
  publisher = {Eur. Math. Soc., Z\"urich},
  year = {2007},
  pages = {445--477}
}

@article{suzuki2020complete,
  title={Complete descriptions of intermediate operator algebras by intermediate extensions of dynamical systems},
  author={Suzuki, Yuhei},
  journal={Communications in Mathematical Physics},
  volume={375},
  number={2},
  pages={1273--1297},
  year={2020},
  publisher={Springer}
}

@article{AHO24,
  title={On the amenable subalgebras of group von Neumann algebras},
  author={Amrutam, T. and Hartman, Y. and Oppelmayer, H.},
  journal={Journal of Functional Analysis},
  volume={288},
  number={2},
  pages={110718},
  year={2025},
  publisher={Elsevier}
}

@article{jiang2026factors,
  title={Factors with prescribed number of invariant subalgebras not arising from subgroups},
  author={Jiang, Y. and Xu, Q.},
  journal={arXiv preprint arXiv:2605.01795},
  pages={27},
  year={2026}
}

@book{brown2008textrm,
  title={{C}{*}-Algebras and Finite-Dimensional Approximations},
  author={Brown, Nathanial Patrick and Ozawa, Narutaka},
  volume={88},
  year={2008},
  publisher={American Mathematical Soc.}
}

\end{document}